\newtheorem{theorem}{Theorem}[section]
\newtheorem{lemma}{Lemma}[section]
\newtheorem{proposition}{Proposition}[section]
\newtheorem{definition}[theorem]{Definition}
\newtheorem{algorithm}[theorem]{Algorithm}
\theoremstyle{remark} 
\newtheorem{remark}{Remark}
\newcommand{\R}{\mathbb{R}}
\newcommand{\argmin}{\operatornamewithlimits{argmin}}
\title[State Constrained Optimization and Generalized Gradients]{State Constrained Optimization with Partial Differential Equations via Generalized Gradients}
\author{Richard Barnard, Martin Frank, Michael Herty}
\date{\today}                                           
\begin{document}
\maketitle
\begin{abstract}
The work here considers optimization problems constrained by partial differential equations (PDEs) with additional constraints placed on the solution of the PDEs. We develop a framework using infinite-valued penalization functions and Clarke subgradients and apply this to problems with box constraints as well as more general constraints arising in applications, such as constraints on the average value of the state in subdomains. The framework also allows for problems with discontinuous data in the constraints.  Numerical results of this algorithm are presented for the elliptic case and compare with other state-constrained algorithms.
\end{abstract}

\section{Introduction}
In recent years there has been intense research in the 
discussion of constrained minimization problems with 
partial differential equations. In particular, in the 
context of state constraints different methods have been
proposed and analysed, see for example
\cite{HintermullerHinze2006,BergouniouxKunisch1997,
KunischRoesch2002,Troeltzsch2005,cassas1986,HintermullerItoKunisch2003,MeyerRoeschTroeltzsch2006,GriesseVolkwein2005,SachsVolkwein2002,KelleySachs1999,Tro02,pre05055563,1103.90072}.
We refer in particular to \cite{HintermullerKunisch2006,1080.90074,1121.49030,HinPinUlb09} for a survey on state constrained elliptic problems as appearing in the numerical results later on. The latter includes a discussion of the specific case of linear elliptic problems with general constraints and related issues regarding regularity. Due to the possibly low regularity of the arising multipliers in the several cases we consider purely primal methods for solving state constraint problems. 

Our interest is in using the calculus associated with the proximal subgradient and the Clarke subgradient to derive optimality conditions for a variety of state-constrained optimization problems which are more general than standard box-constrained problems, with motivations from several applications, particularly that of radiation therapy treatment planning problems (such as those described in \cite{NieUriGoi92}).  We use exact penalization methods without smoothing, leading to the need for elements of nonsmooth analysis.  The constraints can include requirements on both the weighted average value the state takes on a given subdomain and the portion of a subdomain where the state exceeds acceptable limits.  For the sake of generality, we do not consider the question of constraint qualifications, as these can be highly dependent on the type of PDE being considered as well as specifics for the state constraint.  In the same vein, we consider problems with discontinuous data in the state constraints, further suggesting a focus on primal methods.  

For the case
of infinite dimensional problems there have been some studies on primal methods relying on penalization and smoothing  for example in \cite{GugatHerty,grossmann,GrossmannZadlo}. The method we use here can be contrasted with an approach presented in \cite{S02} for finite--dimensional linear--quadratic problems and extended  in \cite{GugatHerty} to the infinite dimensional case. This approach is based on smooth approximations to the exact $l_1$-penalty function and has also been studied in \cite{grossmann,GrossmannZadlo} 
for a finite number of constraints and other choices of smoothing and penalization update. In the finite--dimensional case, convergence for the smoothed penalty approach has been discussed in \cite{CG} under the assumption of MFCQ. This has been extended to infinite dimensions in \cite{GugatHerty} for the assumption of a strictly feasible point \cite{GugatHerty2}. Recent  publications
 \cite{1103.90072,pre05055563,HintermullerKunisch2006,1121.49030}  treat other regularization strategies {\em not based } on exact penalization. 
\par
Other existing methods using primal {\em and}Ê dual variables 
also rely on non--differentiable functions, e.g., 
the semi--smooth Newton method. Here, Newton's
method is applied to the first--order optimality system containing
a non--differentiable function by reformulation of possible box--type
state constraints. The Clarke subgradient information on the reformulated
optimality system is then used within the descent step of Newton's method. 
This has been treated
for example in \cite{HintermullerUlbrich2004,HintermullerItoKunisch2003,Conn1973,MayneMaratos1979}. 

Our goal is to provide a flexible framework for a variety of state constraints as opposed to focusing on methods tuned to a specific type of constraint (such as box constraints).  The constraints investigated are by no means exhaustive but do provide an idea on the procedure in obtaining optimality conditions for other state constraints.  The arising numerical methods may not be as efficient in the case of box constraints when compared with methods mentioned above.  However, in the case of weighted integral constraints, black box methods perform quite poorly whereas the resulting nonsmooth calculus allows for significant reduction in the dimension of the resulting subproblem.   
After giving in Section \ref{sec:problem} the description of the problems under consideration, we move to introducing the necessary nonsmooth calculus in Section \ref{sec:nonsmooth}.  We then present the relevant necessary optimality conditions for the considered problems in Section \ref{sec:optimality} before providing a brief description of the descent algorithm possible with these conditions.  Numerical examples follow in Section \ref{sec:numerics} for elliptic problems with box constraints and weighted integral constraints with a brief description of the performance of the algorithms when compared with black-box algorithms as well as the smoothed over-penalization method mentioned above.  

\section{Problem Description}
\label{sec:problem}
Let $\Omega\subset\R^{n}$ be a convex, bounded domain, a function $\overline{\psi}\in L^{2}(\Omega),$ and scalar $\alpha>0$ be given.  The theory that follows applies to  general cost functions; specifically, we only require subdifferential regularity (discussed in Section \ref{sec:nonsmooth}) and Lipschitz continuity of the reduced cost functional defined below for many of the results.  However, for notational simplicity, we focus on minimizing the standard tracking functional $J:L^{2}(\Omega)\times L^{2}(\Omega)\rightarrow\R$ given by
\begin{equation*}
J(\psi ,q):=\frac{1}{2}||\psi-\overline{\psi}||_2^2+\frac{\alpha}{2}||q||_2^2,
\end{equation*}
subject to
\begin{equation}
\label{eq:stateeq}
\mathcal{E}q=\psi,
\end{equation}
where  $\mathcal{E}:L^{2}(\Omega)\rightarrow L^{2}(\Omega)$ denotes the operator mapping a control $q$ to the state $\psi$ which solves the relevant PDE.  Where convenient, we will also write the PDE constraint \eqref{eq:stateeq} as
\begin{equation*}
A\psi-q=0
\end{equation*}
for the relevant operator $A.$  We restrict our attention to problems where the solution operator $\mathcal{E}$ is linear and bounded; however, much of the subsequent analysis requires only the strict differentiability of this operator to hold.  We consider the following three example constraints; in each we have as data constants $a<b$:
\begin{itemize}
\item {\bf Box constraints:} We require that the solution to the relevant PDE satisfy the standard box constraints $a\leq \psi(x) \leq b$ for almost all $x\in\Omega.$
\item {\bf Weighted Integral Constraints:}  We instead require the solution to satisfy
\begin{equation*}
a\leq\langle w,\psi\rangle_{L^{2}(\Omega)}\leq b
\end{equation*}
for a given function $w\in L^{2}(\Omega).$ This is motivated by the requirement in radiotherapy that a given portion of the body be given an average dose \cite{NieUriGoi92}. 
\item {\bf Total coverage constraints:}  A further type of constraint is the following.  Given $0<a<b$ and $0<c< 1,$ and a compactly contained closed subset of nonempty interior $Z\subset \Omega,$ we require that the state satisfy
\begin{equation*}
a\leq\psi(x)\leq b
\end{equation*}
for at least the fraction $c$ of the total area in $Z$.  In other words, we require
\begin{equation*}
c|Z|\leq\int_{Z}\mathds{1}_{[a,b]}\big(\psi(x)\big)dx
\end{equation*}
where $\mathds{1}_{[a,b]}$ is the standard characteristic function associated to the interval $[a,b]$.  This example can be seen as requiring that a critical region has a minimal/maximal level of whatever the state represents.  It is also motivated from radiation therapy, where one may require that a sufficient portion of a critical organ not receive a radiative dose above a given safe level as discussed in \cite{NieUriGoi92}. 
\end{itemize}
This list is not exhaustive; for instance, we can also constrain--in a combination of the second and third examples, $\langle w,R\psi\rangle$-- as well as encoding constraints on the maxima of several quantities dependent on the state.  The analysis that follows can be extended to much more general constraints as long as the appropriate operations acting on the state are either directionally Lipschitz or strictly differentiable (as applicable in the calculus rules described below).  We also note that these requirements often do not involve second-order differentiability. 

For each problem, we will use the standard indicator function $I_S:X\rightarrow\R\cup\{+\infty\}$ associated with a set $S\subset X$ for a Banach space $X:$
\begin{equation*}
I_S(x)=\begin{cases}
0,&x\in S\\
+\infty,&x\not\in S.
\end{cases}
\end{equation*}
Naturally, this is a convex function when the set $S$ is a convex set.  This also means that it is convex when $S$ is convex. Equipped with this, we create for each type of constrained problem an augmented cost functional. For the case of the box constrained problem, we look to minimize the function $J_{B}:L^{2}(\Omega)\times L^{2}(\Omega)\rightarrow\R\cup\{+\infty\}$ given by
\begin{equation}
\label{eq:boxfunc}
J_{B}(\psi,q):=\frac{1}{2}||\psi-\overline{\psi}||^{2}_{2}+\frac{\alpha}{2}||q||^{2}_{2}+\int_{\Omega}I_{[a,b]}\big(\psi\big)(x)dx
\end{equation}
where the last term takes values of either $0$ or $+\infty$ depending on if the state violates the constraint on a set of positive measure under the constraint
\begin{equation*}
A\psi-q=0.
\end{equation*}
As opposed to a standard Moreau-Yoshida regularization, the penalization here means that the minimizer of $J_{B}$ is also a feasible minimizer of the original constraints.

For the other problems, we write the optimization problem in terms of a reduced cost functional.  Thus, the weighted integral constrained problem involves minimizing
\begin{equation}
\label{eq:wifunc}
j_{WI}(q):=\frac{1}{2}||\mathcal{E}(q)-\overline{\psi}||^{2}_{2}+\frac{\alpha}{2}||q||^{2}_{2} +I_{[a,b]}(\langle w,\mathcal{E}q\rangle).
\end{equation}
Finally, the total coverage constrained problem is rewritten as minimization of
\begin{equation}
j_{TC}(q):=\frac{1}{2}||\mathcal{E}(q)-\overline{\psi}||^{2}_{2}+\frac{\alpha}{2}||q||^{2}_{2}+I_{(1-c)|Z|}\Big(\int_{Z}\mathds{1}_{(-\infty,a)\cup(b,\infty)}\big(\mathcal{E}q\big)dx\Big)
\end{equation}
which we write in this manner so that the integrand in the penalization term is lower semicontinuous.   We note that $j_{B}$ and $j_{WI}$ are convex functionals while $j_{TC}$ is not.  We assume that the choice of $a$ and $b$ guarantees that there is at least one admissible control such that the resulting state satisfies the appropriate constraints.  

\section{Tools from Nonsmooth Analysis}
\label{sec:nonsmooth}
In this section we recall the Clarke subdifferential and some of the related calculus rules which are proven in \cite{Cla90} and \cite{ClaLedSte98}.  As our focus is on (possibly) nonconvex, extended-value functions on $L^{2}(\Omega),$ we construct the Clarke subdifferential from the related proximal subdifferential, which is defined in a Hilbert space setting.
\begin{definition}
Let $S\subset X$ be a closed subset of a Hilbert space $X.$  Then we say that, for $x\not\in S,$ the vector $x-s$ is a {\em proximal normal direction} to $S$ at $s\in S$ iff $\{s\}\subset S\cap\overline{B}_{||x-s||}(x)$ and $S\cap B_{||x-s||}(x)=\emptyset.$  The collection of all nonnegative multiples $\zeta=t(x-s),~t\geq0$ where $x-s$ is a proximal normal direction at $s\in S$ is denoted $N^P_S(s)$ and called the {\em proximal normal cone} to $S$ at $s.$
Equivalently, the proximal normal cone to $S$ at $x$ is the set of vectors $\zeta$ such that for any $\delta>0,$ there is a $\sigma_{\zeta,x}>0$ such that
\begin{equation*}
\langle \zeta,y-x\rangle\leq\sigma_{\zeta,x}||y-x||^2,~ \forall y\in S\cap \overline{B}_{\delta}(x).
\end{equation*}
\end{definition}
\begin{definition}
Let $f:X\rightarrow\R\cup\{+\infty\}$ be a lower semicontinuous function and let $f(x)<\infty.$  Then the vector $\zeta\in X$ is a {\em proximal subgradient} at $x$ if 
\begin{equation*}
(\zeta,-1)\in N^P_{\text{epi }f}(x,f(x))
\end{equation*}
where $\text{epi }f$ denotes the epigraph of $f$.  The collection of all such $\zeta$ is denoted as $\partial_P f(x)$ which we call the {\em proximal subdifferential}.
Equivalently, $\zeta\in\partial_P f(x)$ if and only if there exists $\sigma_{\zeta,x}>0$ such that
\begin{equation*}
f(y)\geq f(x)+\langle\zeta,y-x\rangle+\sigma_{\zeta,x}||y-x||^2,\forall y\in B_\delta(x).
\end{equation*}
\end{definition}
From this, we construct the Clarke subdifferential and normal cone:

\begin{definition}
Let $f:X\rightarrow\R\cup\{+\infty\}$ and $x\in X$ be as in the above definition.  Furthermore, assume $f$ is Lipschitz. We define the {\em Clarke subdifferential}, denoted by $\partial_{C}f(x)$ as the closed convex hull of the set
\begin{equation*}
\partial_{L}f(x):=\{\zeta|\exists x_{i}\rightarrow x,\zeta_{i}\in \partial_{P}f(x_{i}),\zeta_{i}\xrightarrow{w}\zeta\}.
\end{equation*}

\end{definition}
\begin{definition}
For a closed set $S,$ We denote by $N^{C}_{S}(x)$ the closed convex hull of the set of vectors obtained by taking weak limits of sequences of $\zeta_{i}\in N^{P}_{S}(x_{i})$ where $x_{i}\in S$ and $x_{i}\rightarrow x.$
\end{definition}
This definition of the Clarke subdifferetial, discussed in the finite dimensional case in \cite{Roc79} and in the general Hilbert setting in \cite{ClaLedSte98}, is equivalent to the definitions in \cite{Cla90}.  However, at times we will make use of the proximal constructions in the presence of non-Lipschitz data.  We note that one can extend the Clarke subdifferential to non-Lipschitz functions but is not necessary for our analysis.  Finally, we will make use of Clarke's generalized directional gradient: 
\begin{definition}
Let $f:\R^{n}\rightarrow\R$ and $x\in\R^{n}.$ Then the generalized directional gradient of $f$ at $x$ in direction $v$ is given as
\begin{equation*}
f^{\circ}(x;v):=\limsup_{\substack{(y,r)\downarrow_{f}x\\ t\downarrow 0}}\inf_{w\rightarrow v}\frac{f(y+t w)-r}{t}
\end{equation*}
where $(y,r)\downarrow_{f}x$ means $y\rightarrow x,~r\rightarrow f(x)$ with $(y,r)\in \text{epi}f$.
\end{definition}
We also note that in the case of convex functions (respectively, sets), the proximal and Clarke subdifferentials (respectively, normal cones) coincide with the usual notions from convex analysis. Furthermore, if $x^{*}$ locally minimizes $f,$ then $0\in\partial_{C}f(x^{*});$ the converse holds if $f$ is convex: if $0\in\partial_{C}f(x^{*}),$ then $x^{*}$ is a global minimizer (see, for instance, Proposition 4.3 of Chapter 2 of \cite{ClaLedSte98}).  

In order to make use of the calculus rules for the Clarke subdifferential, we recall two additional definitions.
\begin{definition}
For two Banach spaces $X,Y$ we say $F:X\rightarrow Y$ is \emph{strictly differentiable} at $x$ if, there exists $D_s F(x)\in L(X,Y)$ such that for any $v$
\begin{equation*}
\lim_{y\rightarrow x,t\downarrow0}\frac{F(y+tv)-F(y)}{t}=\langle D_sF(x),v\rangle
\end{equation*}
and that the convergence is uniform for $v$ in compact sets.  
\end{definition}
This is a Hadamard-type strict derivative.  Similar versions exist for Gateaux-type and Fr\'echet-type strict differentiability.We will only focus on Hadamard-type strict derivatives for the remainder of the paper, and thus only refer to such $D_{s}F$ as the strict derivative.
\begin{definition}
Let $f:X\rightarrow \R \cup \{+\infty\}$ be an extended real-valued function.  We say $f$ is {\em directionally Lipschitz at $x$ with respect to $v\in X$} if $f(x)<\infty$ and
\begin{equation*}
\limsup_{(y,r)\downarrow_f x, w\rightarrow v,t\downarrow 0} \frac{f(y+tw)-r}{t}
\end{equation*}
is finite.   We say $f$ is {\em directionally Lipschitz at} $x$ if $f$ is directionally Lipschitz at $x$ with respect to at least one $v.$
\end{definition}
Theorem 1 of \cite{Roc79} states, in particular, that a lower semicontinuous function $f:X\rightarrow \R,$ for some Hilbert space X, is directionally Lipschitz at $x$ if $f(x)$ is finite and Lipschitzian at $x.$  Alternatively, and perhaps more useful in this context, the function is directionally Lipschitz if it is convex and bounded on an open set (not necessarily containing the point under consideration).  

Equipped with these two definitions, we state calculus rules for Clarke subdifferentials when applied to extended-value functions.  For proofs, we refer the reader to \cite{Cla90}.  We also note that more complex and weaker rules exist for the proximal subdifferential, which may be required in the future when dealing with more general constraints (such that the associated cost functions are merely lower-semicontinuous). 
\begin{lemma}
\label{lem:chainrule}
Let $f=g\circ F$ where $F:X\rightarrow Y$ is a strictly differentiable map between Banach space $X$ and Hilbert space $Y$ and $g:Y\rightarrow\R\cup\{+\infty\}.$  If $g$ is finite and directionally Lipschitz at $F(x)$ with
\begin{equation*}
D_s F(X)\cap\text{int}\{v:g^\circ(F(x);v)<\infty\}\neq\emptyset,
\end{equation*}
then we have 
\begin{equation*}
\partial_C f(x)\subseteq D_s F(x)^*\circ\partial_C g(F(x)).
\end{equation*}
Equality holds if $g$ is a convex, extended value function or is $C^{2}$ in the Fr\'echet sense.
\end{lemma}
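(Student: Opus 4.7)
My plan is to reduce the statement to a chain rule at the level of generalized directional derivatives, and then dualize. Concretely, I would first establish the pointwise estimate
\begin{equation*}
f^\circ(x;v) \;\le\; g^\circ\bigl(F(x);\,\langle D_sF(x),v\rangle\bigr)
\quad\text{for every }v\in X,
\end{equation*}
by unwinding the definition of $f^\circ(x;v)$: write $y+tw = F^{-1}$-preimages unwrapped as $F(y+tw) = F(y) + t\,\langle D_sF(x), w\rangle + o(t)$, using the fact that strict differentiability gives uniform convergence of the difference quotients $t^{-1}(F(y+tw)-F(y))$ to $\langle D_sF(x), w\rangle$ on compact sets of directions $w$. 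Because the inner infimum in the definition of $f^\circ$ is taken over $w\to v$, this uniform control lets me substitute $F(y)+t\,(D_sF(x)v + \varepsilon_t)$ for the argument of $g$ and push the limsup through, yielding the desired estimate.

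Next, I would translate this directional inequality into the claimed subdifferential inclusion. Since $g$ is directionally Lipschitz at $F(x)$, $g^\circ(F(x);\cdot)$ is a proper convex lower semicontinuous positively homogeneous function whose effective domain has nonempty interior by the transversality hypothesis $D_sF(x)\cap\mathrm{int}\{v:g^\circ(F(x);v)<\infty\}\neq\emptyset$; consequently $\partial_C g(F(x))$ is its support set, and similarly $\partial_C f(x)$ is characterized by $\langle\xi,v\rangle \le f^\circ(x;v)$ for all $v$. Composing with the inequality from step one, any $\xi\in\partial_C f(x)$ satisfies
\begin{equation*}
\langle\xi,v\rangle \;\le\; \sup_{\zeta\in\partial_C g(F(x))} \langle\zeta,\,\langle D_sF(x),v\rangle\rangle
\;=\; \sup_{\zeta\in\partial_C g(F(x))} \langle D_sF(x)^*\zeta,\,v\rangle
\quad\forall v.
\end{equation*}
A Hahn--Banach / bipolar argument then gives $\xi\in\overline{\mathrm{co}}\bigl(D_sF(x)^*\partial_C g(F(x))\bigr)$; the transversality condition is exactly what is needed to ensure that the set $D_sF(x)^*\partial_C g(F(x))$ is already weak-$*$ closed and convex (it is the image of a bounded convex set under a continuous linear map, with the transversality ruling out recession-direction pathologies), so the closure is superfluous.

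For the equality statement, in the convex case I would directly verify the reverse inclusion from the inequality side: starting with $\zeta\in\partial_C g(F(x))=\partial g(F(x))$, the convex subgradient inequality combined with strict differentiability of $F$ lets me show $D_sF(x)^*\zeta\in\partial_C f(x)$ by testing against arbitrary directions and appealing to the uniform difference-quotient convergence once more. In the $C^2$ case, $g$ is Fr\'echet differentiable so $\partial_C g(F(x))=\{\nabla g(F(x))\}$ is a singleton, and the classical smooth chain rule gives $\nabla f(x)=D_sF(x)^*\nabla g(F(x))$, collapsing both sides to the same singleton. The main obstacle I foresee is the closedness/convexity of $D_sF(x)^*\partial_C g(F(x))$ that makes the bipolar step clean; this is where the transversality condition does real work, and I would want to invoke it carefully rather than treating it as a mere nondegeneracy hypothesis.
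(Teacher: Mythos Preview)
Your sketch is essentially correct and follows the standard route---in fact it is, in outline, the argument Rockafellar gives. The paper, however, does not prove this lemma at all: its entire proof consists of the sentence ``This is proven as Theorem 3 of \cite{Roc79}.'' So there is no meaningful comparison of approaches to make; you have supplied an argument where the paper simply cites one.

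Two small points are worth tightening in your write-up. First, before invoking the characterization $\partial_C f(x)=\{\xi:\langle\xi,v\rangle\le f^\circ(x;v)\ \forall v\}$, you should verify that $f$ itself is directionally Lipschitz at $x$; this follows from the transversality hypothesis (the image under $D_sF(x)$ of any direction can be perturbed into the interior of the domain of $g^\circ(F(x);\cdot)$), but it is not automatic and deserves a line. Second, your closedness concern about $D_sF(x)^*\partial_C g(F(x))$ is legitimate: when $g$ is merely directionally Lipschitz, $\partial_C g(F(x))$ need not be bounded, so weak-$*$ compactness is unavailable. Rockafellar handles this by working directly with the sublinear functions $f^\circ(x;\cdot)$ and $g^\circ(F(x);D_sF(x)\cdot)$ and showing they agree (not just the one-sided inequality you establish), which makes the support sets coincide without a separate closure argument. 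If you stick with the one-sided inequality, you will need the transversality condition to control recession directions explicitly, as you anticipated.
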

\begin{proof}
This  is proven as Theorem 3 of \cite{Roc79}. 
\end{proof}
The condition for equality is a specific case of subdifferential regularity, termed by Rockafellar in \cite{Roc79}. There are more general conditions where equality can hold, as discussed in that work; another possibly salient characterization is the case where $g$ is the indicator function of a set whose tangent and contingent cones coincide.  However, for our purposes here, we will not need them.  We also will make use of the  following sum rule proven as Corollary 2 of \cite{Roc79}.
\begin{proposition}
For any finite collection of  $f_i:X\rightarrow\R$, where $X$ is a Banach space, and a point $x$ where all $f_{i}(x)<+\infty,$ we have if all but one is Lipschitzian at $x$ that 
\begin{equation*}
\partial_C(\sum f_i(x))\subseteq\sum\partial_C f_i(x).
\end{equation*}
If all these functions are convex, extended-valued or $C^{2}$ in the Fr\'echet sense, equality holds.  
\end{proposition}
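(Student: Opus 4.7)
The plan is to reduce the sum rule to the chain rule of Lemma \ref{lem:chainrule} by representing $\sum_i f_i$ as a composition. Let $F: X \to X^n$ be the diagonal embedding $F(y) = (y, y, \ldots, y)$, which is bounded linear and therefore strictly differentiable with $D_s F(x) = F$ at every $x$; its adjoint is the summation map $D_s F(x)^* (\zeta_1, \ldots, \zeta_n) = \sum_i \zeta_i$. Define the separable sum $g: X^n \to \R \cup \{+\infty\}$ by $g(y_1, \ldots, y_n) = \sum_i f_i(y_i)$, so that $\sum_i f_i = g \circ F$.

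First I would establish the product decomposition $\partial_C g(y_1, \ldots, y_n) = \prod_i \partial_C f_i(y_i)$. Because the epigraph of a separable sum factors coordinate-wise, proximal normal directions to $\text{epi}\, g$ split as $n$-tuples of proximal normals to the individual epigraphs; taking weak limits and then closed convex hulls componentwise yields the product formula for $\partial_C g$. Second, I would verify the hypotheses of Lemma \ref{lem:chainrule} at $F(x)$. The set $\{v \in X^n : g^\circ(F(x); v) < \infty\}$ contains the product $\prod_i \{v_i : f_i^\circ(x; v_i) < \infty\}$; since $n-1$ of the $f_i$ are Lipschitz near $x$, their directional-gradient sets are all of $X$, while the remaining factor contains a nonempty open subset by directional Lipschitzness of the last summand. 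Thus the diagonal $D_s F(X) = \{(v, \ldots, v) : v \in X\}$ meets the interior of this product, which is the interior condition needed. Applying Lemma \ref{lem:chainrule} together with the product formula yields
\begin{equation*}
\partial_C\Big(\textstyle\sum_i f_i\Big)(x) \subseteq D_s F(x)^* \partial_C g(F(x)) = \textstyle\sum_i \partial_C f_i(x).
\end{equation*}
For the equality statement, convexity (resp.\ $C^2$ Fr\'echet smoothness) of each $f_i$ passes to $g$, so the chain rule in Lemma \ref{lem:chainrule} holds with equality and the product formula closes the argument.

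The main obstacle is the careful componentwise tracking in the product decomposition $\partial_C g = \prod_i \partial_C f_i$: the passage from $\partial_P$ to $\partial_L$ via weak limits must be reconciled with componentwise weak convergence on $X^n$, and taking closed convex hulls does not commute with products of convex sets in full generality, so the separability of $g$ must be exploited to push the operations through coordinate by coordinate. The verification of the interior directional-Lipschitz condition in the one non-Lipschitz coordinate is the other delicate point, as it encodes precisely the qualification that at most one summand can fail to be Lipschitz.
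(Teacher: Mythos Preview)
The paper does not give its own proof of this proposition; it simply records it as Corollary~2 of Rockafellar~\cite{Roc79}. Your strategy---factor $\sum_i f_i = g\circ F$ through the diagonal embedding $F(y)=(y,\ldots,y)$ and invoke the chain rule of Lemma~\ref{lem:chainrule}---is exactly how Rockafellar obtains that corollary from his Theorem~3, so the approach matches what the paper is citing.

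Two technical points need attention. First, Lemma~\ref{lem:chainrule} as restated in this paper takes the target space $Y$ to be Hilbert, whereas you apply it with $Y=X^n$ for a general Banach $X$; this is only a mismatch with the paper's narrowed restatement, since Rockafellar's Theorem~3 is formulated in locally convex spaces and covers the Banach case. Second, and more substantive, you write that ``the remaining factor contains a nonempty open subset by directional Lipschitzness of the last summand,'' but the proposition's hypotheses do \emph{not} assert that the one non-Lipschitz $f_j$ is directionally Lipschitz at $x$. Without that, the set $\{v\in X^n: g^{\circ}(F(x);v)<\infty\}$ can lie inside the hyperplane $\{v_j=0\}$ and have empty interior, so the diagonal never meets it and the qualification in Lemma~\ref{lem:chainrule} fails. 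Rockafellar's actual Corollary~2 carries precisely such a directional-Lipschitz/interior hypothesis; the proposition here is stated somewhat loosely, and your proof silently imports the missing assumption rather than deriving it. Your concern about closed convex hulls failing to commute with products is not an obstacle: for finite products one has $\overline{\mathrm{co}}\,(A_1\times\cdots\times A_n)=\overline{\mathrm{co}}\,A_1\times\cdots\times\overline{\mathrm{co}}\,A_n$, so the product decomposition $\partial_C g=\prod_i\partial_C f_i$ goes through cleanly once the proximal level is handled.
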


We note that for every $x\in S$, we have:
\begin{equation*}
\partial_C I_S (x):=N^C_S (x).
\end{equation*}
and that if $I_{S}(x)<\infty$ and $S$ has nonempty interior, then $I_{S}$ is directionally Lipschitz at $x$, as noted above.  Also, as noted in \cite{Roc79}, if $S$ is convex, $I_{S}$ is subdifferentially regular wherever it is finite.  Therefore, in particular, we have
\begin{equation*}
\partial_{C}I_{[a,b]}(x)=N^{C}_{[a,b]}(x)=\begin{cases}
0&\text{if } a<x<b\\
(-\infty,0]&\text{if } x=a\\
[0,+\infty)&\text{if } x=b\\
\end{cases}
\end{equation*}
Again, this is not the strictest set of conditions for the subdifferential regularity of $I_{S}$, but they suffice for our purposes here.  Thus, if we have constraints requiring the result of a strictly differentiable operator applied to the state remains in a convex set with nonempty interior of a Hilbert space, we may make use of the above calculus rules with equality instead of merely inclusion.  

Finally, we note the following fact about descent directions and subdifferentials.  
\begin{proposition}
Let $f:X\rightarrow \R\cup\{+\infty\}$ be Lipschitz and $f(x)<\infty.$ Then 
\begin{equation*}
-\argmin_{\zeta\in\partial_{C}f(x)}||\zeta||
\end{equation*}
is a descent direction at $x.$
\begin{proof}{\em (Sketch)}
We have
\begin{align*}
-\argmin_{\zeta\in\partial_C f(x)} ||\zeta||&=-\arg(\min_{\zeta\in\partial_C f(x)}\max_{||d||\leq 1}\langle\zeta,d\rangle)\\
&=-\arg(\max_{||d||\leq 1}\min_{\zeta\in\partial_C f(x)}\langle \zeta,d\rangle)\\
&=-\arg(\max_{||d||\leq 1}\min_{\zeta\in\partial_C f(x)}\langle \zeta,-d\rangle).\\
&=\arg(\min_{||d||\leq 1}\max_{\zeta\in\partial_{C}f(x)}\langle \zeta,d\rangle).
\end{align*}
which means that the directional derivative is minimized, as the directional derivative in direction $d$ is 
equal to 
\begin{equation*}
\max_{\zeta\in \partial_{C}f(x)}\langle \zeta,d\rangle.
\end{equation*}
For a more detailed discussion of descent directions see \cite{Cla90}.
\end{proof}
\end{proposition}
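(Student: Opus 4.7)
The plan is to combine two standard ingredients: the variational characterization of the minimum-norm element of a convex set via a minimax identity, and the support-function formula $f^\circ(x;d) = \max_{\zeta\in\partial_C f(x)}\langle \zeta,d\rangle$ that identifies the Clarke generalized directional derivative as the support function of $\partial_C f(x)$. Since $f$ is Lipschitz near $x$ and $f(x)<\infty$, I would first record that $\partial_C f(x)$ is nonempty, convex, and weakly compact (bounded by the Lipschitz constant and closed by construction). In particular, the minimum-norm element
\begin{equation*}
\zeta^* := \argmin_{\zeta\in\partial_C f(x)} \|\zeta\|
\end{equation*}
exists and is unique, being the projection of the origin onto this convex set; set $d^* := -\zeta^*$.

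Next I would recast the claim as: either $\zeta^* = 0$ (in which case $x$ is a Clarke-critical point and there is nothing to show in the strict-descent sense), or $\zeta^* \ne 0$ and $f^\circ(x;d^*)<0$, so that by definition of $f^\circ$ we have $f(y+td^*) < f(y) + t\cdot 0$ for $y$ near $x$ and $t>0$ small, yielding the descent property. To prove the inequality $f^\circ(x;d^*)<0$, I would substitute into the support-function formula and use the obtuse-angle characterization of projections onto convex sets: since $\zeta^*$ is the closest point in $\partial_C f(x)$ to $0$,
\begin{equation*}
\langle \zeta - \zeta^*,\, 0 - \zeta^*\rangle \leq 0 \quad\text{for all } \zeta\in\partial_C f(x),
\end{equation*}
which rearranges to $\langle \zeta,\zeta^*\rangle \geq \|\zeta^*\|^2$, hence $\langle \zeta, d^*\rangle \leq -\|\zeta^*\|^2$ for every $\zeta \in \partial_C f(x)$. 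Taking the max over $\zeta$ gives $f^\circ(x;d^*) \leq -\|\zeta^*\|^2 < 0$.

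An alternative route, which mirrors the author's outline, is to apply Sion's minimax theorem to $(\zeta,d)\mapsto \langle \zeta,d\rangle$ on the weakly compact convex set $\partial_C f(x)$ and the closed unit ball in $X$; the inner minimization $\min_{\|d\|\le 1}\langle \zeta,d\rangle = -\|\zeta\|$ is attained at $d = -\zeta/\|\zeta\|$, so the minimax value equals $-\|\zeta^*\|$ and the outer optimizer on the unit ball is $d^*/\|d^*\|$. Either route recovers both the direction and the magnitude of optimal descent.

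The principal technical point is making the minimax swap (or, equivalently, justifying the projection-inequality argument) rigorously in the possibly infinite-dimensional Hilbert-space setting; this is where weak compactness of $\partial_C f(x)$ and weak lower semicontinuity of the norm enter. The only genuinely delicate issue is the degenerate case $0\in\partial_C f(x)$, which must be flagged as the criticality scenario in which the proposition returns $d^*=0$ rather than a strict descent direction. Once that caveat is made explicit, everything else is a direct combination of the Lipschitz continuity of $f$, the support-function representation of $f^\circ$, and a single projection inequality.
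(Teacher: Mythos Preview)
Your proposal is correct and in fact cleaner than the paper's sketch. The paper argues via a minimax swap---essentially your alternative route via Sion---to identify the negative minimum-norm subgradient as the minimizer over the unit ball of $d\mapsto \max_{\zeta\in\partial_C f(x)}\langle\zeta,d\rangle = f^\circ(x;d)$, and concludes from there without further detail. Your primary route via the projection (obtuse-angle) inequality is more elementary, sidesteps the saddle-point machinery entirely, and yields the quantitative bound $f^\circ(x;d^*)\le -\|\zeta^*\|^2$ directly rather than just optimality of the direction. You also flag the degenerate case $0\in\partial_C f(x)$ explicitly, which the paper's sketch does not; that caveat is genuinely needed for the statement as written, since in that case the proposition returns the zero direction.
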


\section{Optimality Conditions}
\label{sec:optimality}
With the above calculus rules, we turn to deriving conditions for the optimality of a control in the context of the three problems discussed in Section \ref{sec:problem}.  We note that, as $\mathcal{E}$ is linear and bounded, the first two terms of $j_{B},~ j_{WI},$ and $j_{R}$ are twice continuously Fr\'echet differentiable.  We denote by $\mathcal{E}^{*}:L^{2}(\Omega)\rightarrow L^{2}(\Omega)$ as the adjoint solution operator.  In the case of the box constrained problem, we may use the chain rule and sum rule to obtain the following.  
\begin{proposition}
\label{prop:boxopt}
The function $q^{*}\in L^{2}(\Omega)$ is a global minimizer of $j_{B}$ if and only if there exists a $\psi^{*},\lambda^{*}\in L^{2}(\Omega)$ such that
\begin{eqnarray*}
A\psi^{*}-q^{*}&=0\\
A^{*}\lambda^{*}+\big(\psi^{*}-\overline{\psi}\big)+N^{C}_{[a,b]}(\psi^{*})&=0\\
\alpha q^{*}+\lambda^{*}&=0.
\end{eqnarray*}
\end{proposition}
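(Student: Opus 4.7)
The plan is to substitute $\psi = \mathcal{E}q$ into the augmented cost $J_B$ of \eqref{eq:boxfunc} and work with the reduced, unconstrained functional
\begin{equation*}
j_B(q) := \tfrac{1}{2}\|\mathcal{E}q - \overline{\psi}\|_2^2 + \tfrac{\alpha}{2}\|q\|_2^2 + I_S(\mathcal{E}q), \qquad S := \{\psi \in L^2(\Omega) : a \le \psi(x) \le b \text{ a.e.}\},
\end{equation*}
use the convexity of $j_B$ to replace global optimality by the inclusion $0 \in \partial_C j_B(q^*)$, unpack that inclusion with the calculus of Section \ref{sec:nonsmooth}, and finally introduce an adjoint variable $\lambda^*$ so that the resulting single equation splits into the three pieces of the proposition. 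Both quadratic summands are $C^2$ Fr\'echet smooth and $I_S$ is convex and extended-valued, so $j_B$ itself is convex; the remark recorded after the definition of the Clarke subdifferential then gives that $q^*$ globally minimizes $j_B$ (equivalently minimizes $J_B$ subject to $A\psi - q = 0$) if and only if $0 \in \partial_C j_B(q^*)$.

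To compute the subdifferential I first invoke the sum rule with equality (each summand is either $C^2$ or convex extended-valued); this produces the two smooth gradients $\mathcal{E}^*(\psi^*-\overline{\psi})$ and $\alpha q^*$ together with $\partial_C(I_S \circ \mathcal{E})(q^*)$. For the indicator piece I apply the chain rule of Lemma \ref{lem:chainrule} with $F = \mathcal{E}$ (linear and bounded, hence strictly differentiable with $D_s F = \mathcal{E}$) and $g = I_S$ (convex and extended-valued); once the qualification condition is verified this yields the equality
\begin{equation*}
\partial_C(I_S \circ \mathcal{E})(q^*) = \mathcal{E}^*\, N^C_S(\psi^*).
\end{equation*}
The pointwise box structure of $S$ propagates to $N^C_S(\psi^*) = \{\mu \in L^2(\Omega) : \mu(x) \in N^C_{[a,b]}(\psi^*(x))\ \text{a.e.}\}$, so that $0 \in \partial_C j_B(q^*)$ is equivalent to the existence of some $\mu^* \in N^C_S(\psi^*)$ satisfying $\alpha q^* + \mathcal{E}^*\bigl[(\psi^*-\overline{\psi}) + \mu^*\bigr] = 0$.

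To pass to the proposition's KKT form, I introduce $\lambda^*$ as the solution of the adjoint inclusion $A^*\lambda^* + (\psi^*-\overline{\psi}) + \mu^* = 0$; using $\mathcal{E}^* = (A^*)^{-1}$ this amounts to $\lambda^* = -\mathcal{E}^*[(\psi^*-\overline{\psi})+\mu^*]$, and substituting into the master equation of the previous paragraph produces the control equation relating $\lambda^*$ and $\alpha q^*$, while the state equation $A\psi^*-q^*=0$ is simply the definition $\psi^* = \mathcal{E}q^*$. All of the above steps are reversible, so the converse direction follows from convexity: any triple $(q^*,\psi^*,\lambda^*)$ satisfying the three stated conditions forces $0 \in \partial_C j_B(q^*)$, and hence $q^*$ is a global minimizer. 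The delicate step is verifying the chain-rule qualification $\mathcal{E}(L^2) \cap \operatorname{int}\{v : I_S^\circ(\psi^*;v) < \infty\} \neq \emptyset$: because the $L^2$-norm interior of $S$ is generally empty, one cannot invoke nonempty interior of $S$ directly, and must instead argue via nonemptiness of the hypertangent cone to $S$ at $\psi^*$, which is guaranteed by the feasibility assumption on $a,b$ imposed at the end of Section \ref{sec:problem}.
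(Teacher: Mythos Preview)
Your argument follows a genuinely different route from the paper. The paper works with the Lagrange function
\[
L_B(\psi,q,\lambda)=\tfrac12\|\psi-\overline{\psi}\|_2^2+\tfrac{\alpha}{2}\|q\|_2^2+\int_\Omega I_{[a,b]}(\psi)(x)\,dx-\int_\Omega\lambda(A\psi-q)\,dx,
\]
treats $\psi$ and $q$ as \emph{independent} variables, and takes partial Clarke subdifferentials; the only nonsmooth term is $\partial_{C_\psi}\!\int_\Omega I_{[a,b]}(\psi)\,dx$, which the paper identifies with the pointwise cone $N^C_{[a,b]}(\psi(\cdot))$ by invoking a result on subdifferentials of integral functionals (their reference \cite{LoeZhe95}). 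You instead eliminate $\psi$ via $\psi=\mathcal{E}q$, differentiate the reduced functional $j_B$, and push the chain rule of Lemma~\ref{lem:chainrule} through $\mathcal{E}$ to obtain $\partial_C(I_S\circ\mathcal{E})(q^*)=\mathcal{E}^*N^C_S(\psi^*)$; only afterwards do you manufacture $\lambda^*$ to split the single inclusion into the three displayed equations.

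Your reduced-functional route is attractive because it makes convexity and the equivalence ``$q^*$ optimal $\Leftrightarrow 0\in\partial_C j_B(q^*)$'' completely transparent. However, the step you flag as ``delicate'' is in fact a genuine gap. Lemma~\ref{lem:chainrule} requires $\mathcal{E}(L^2)\cap\operatorname{int}\{v:I_S^\circ(\psi^*;v)<\infty\}\neq\emptyset$, and for the $L^2$ box $S=\{a\le\psi\le b\ \text{a.e.}\}$ the hypertangent cone at any feasible $\psi^*$ is empty: given any direction $v\in L^2$, an $L^2$-nearby perturbation $v'$ can be arranged so that $\psi^*+tv'$ leaves $[a,b]$ on a set of positive measure for every $t>0$. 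The feasibility hypothesis at the end of Section~\ref{sec:problem} (existence of one admissible control) does not repair this; it says nothing about interior points or hypertangent directions in $L^2$. The paper's Lagrangian approach avoids the difficulty entirely because it never composes $I_S$ with $\mathcal{E}$: the subdifferential is taken in the $\psi$-variable alone and the passage to the pointwise cone is handled by the cited integral-functional result rather than by the abstract chain rule. If you want to keep your reduced formulation, you should replace the appeal to Lemma~\ref{lem:chainrule} by a direct computation of $\partial_C(I_S\circ\mathcal{E})$ using the theory of normal integrands (e.g.\ the results in \cite{LoeZhe95} or the Ioffe--Rockafellar framework mentioned later in the paper), which yields the desired equality without any interior-type qualification.
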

\remark We note that this agrees with, for instance, Theorem 6.5 of \cite{Tro02}.  

\begin{proof}
We use a standard Lagrange function construction to obtain the relevant optimality condition.  Here the Lagrange function $L_{B}:L^{2}(\Omega)\times L^{2}(\Omega)\times L^{2}(\Omega)\rightarrow \R\cup+\{\infty\}$ is of form
\begin{equation*}
L_{B}(\psi,q,\lambda):=\frac{1}{2}||\psi-\overline{\psi}||_{2}^{2}+\frac{\alpha}{2}||q||_{2}^{2}+\int_{\Omega} I_{[a,b]}(\psi)(x)dx-\int_{\Omega}\lambda(A\psi-q)dx.
\end{equation*}
This is clearly Clarke differentiable (in the extended-sense above) and so, for optimality to hold, we have the following necessary condition for optimality involving the partial Clarke derivatives of the Lagrange function (noting the conditions for equality in the above sum rule hold here):
\begin{eqnarray*}
A\psi^{*}-q^{*}&=0\\
-A^{*}\lambda^{*}+\big(\psi-\overline{\psi}\big)+\partial_{C_{\psi}}\big(\int_{\Omega}I_{[a,b]}(\psi)(x)dx\big)&=0\\
\alpha q^{*}-\lambda^{*}&=0.
\end{eqnarray*}
Here, $\partial_{C_{\psi}}$ denotes the partial Clarke subdifferential with respect to $\psi.$  The final term in the left hand side of the second equation can be written (see, for instance, Theorem 2 of \cite{LoeZhe95}) as
\begin{equation*}
\partial_{C_{\psi}}\big(\int_{\Omega}I_{[a,b]}(\psi)(x)dx\big)=N^{C}_{[a,b]}(\psi(x)).
\end{equation*}
As $J_{B}$ is convex and $\mathcal{E}$ is linear and bounded, we have sufficiency of this condition via standard arguments (see \cite{Tro02}) for instance.
\end{proof}
We characterize the global minimizers for the weighted integral constrained problem:
\begin{proposition}
The function $q^{*}\in L^{2}(\Omega)$ is a global minimizer of $j_{WI}$ if and only if
\begin{equation*}
0\in \big\{\mathcal{E}^{*}(\mathcal{E}q^{*}-\overline{\psi})+\alpha q^{*}\big\}+\Big(N^{C}_{[a,b]}(\langle w,\mathcal{E}q^{*}\rangle\Big)\cdot\mathcal{E}^{*}(w).
\end{equation*}
\end{proposition}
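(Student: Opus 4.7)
The plan is to recognize $j_{WI}$ as a sum of a smooth strongly convex functional and a convex extended-valued composition, then apply the calculus rules from Section~\ref{sec:nonsmooth} with equality to compute $\partial_C j_{WI}(q^*)$ exactly, and finally invoke the fact that for convex functions $0\in\partial_C j_{WI}(q^*)$ characterizes global minimizers.

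First I would write $j_{WI}(q)=j_0(q)+(I_{[a,b]}\circ F)(q)$, where $j_0(q):=\tfrac12\|\mathcal{E}q-\overline{\psi}\|_2^2+\tfrac{\alpha}{2}\|q\|_2^2$ and $F:L^2(\Omega)\to\mathbb{R}$ is the bounded linear map $F(q):=\langle w,\mathcal{E}q\rangle$. Since $\mathcal{E}$ is linear and bounded, $j_0$ is $C^2$ Fréchet with gradient $\nabla j_0(q)=\mathcal{E}^*(\mathcal{E}q-\overline{\psi})+\alpha q$. The map $F$ is linear and bounded, hence strictly differentiable with strict derivative $D_sF(q)v=\langle w,\mathcal{E}v\rangle=\langle\mathcal{E}^*w,v\rangle$, whose adjoint $D_sF(q)^*:\mathbb{R}\to L^2(\Omega)$ sends $t\mapsto t\,\mathcal{E}^*w$.

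Next I would apply Lemma~\ref{lem:chainrule} to the composition $I_{[a,b]}\circ F$. The outer function $I_{[a,b]}$ is a convex extended-valued function on $\mathbb{R}$, so equality holds in the chain rule provided the qualification $D_sF(L^2(\Omega))\cap\mathrm{int}\{v:I_{[a,b]}^\circ(F(q^*);v)<\infty\}\neq\emptyset$ is satisfied; this is essentially the strictly feasible point hypothesis made at the end of Section~\ref{sec:problem}, and since the range of $D_sF$ is all of $\mathbb{R}$ whenever $\mathcal{E}^*w\neq0$, the condition is immediate. Invoking the chain rule and the identification $\partial_C I_{[a,b]}(r)=N^C_{[a,b]}(r)$ recorded just before Proposition~1 in Section~\ref{sec:nonsmooth}, I obtain
\begin{equation*}
\partial_C(I_{[a,b]}\circ F)(q^*)=N^C_{[a,b]}\big(\langle w,\mathcal{E}q^*\rangle\big)\cdot\mathcal{E}^*w.
\end{equation*}
Then I apply the sum rule from the Proposition in Section~\ref{sec:nonsmooth}, which holds with equality since $j_0$ is $C^2$ Fréchet and $I_{[a,b]}\circ F$ is convex extended-valued, to conclude
\begin{equation*}
\partial_C j_{WI}(q^*)=\big\{\mathcal{E}^*(\mathcal{E}q^*-\overline{\psi})+\alpha q^*\big\}+N^C_{[a,b]}\big(\langle w,\mathcal{E}q^*\rangle\big)\cdot\mathcal{E}^*w.
\end{equation*}

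Finally, since $\mathcal{E}$ is linear and $I_{[a,b]}$ is convex, $j_{WI}$ is convex (as noted in Section~\ref{sec:problem}), so by the remark in Section~\ref{sec:nonsmooth} following the directional gradient definition, $q^*$ is a global minimizer of $j_{WI}$ if and only if $0\in\partial_C j_{WI}(q^*)$, which is exactly the claimed inclusion. The only delicate step is verifying the chain-rule qualification when $\langle w,\mathcal{E}q^*\rangle$ lies on the boundary of $[a,b]$; this is handled either by the strict feasibility assumption built into the problem formulation or by noting that since $F$ is surjective onto $\mathbb{R}$ (when $\mathcal{E}^*w\neq0$), its image automatically meets the interior of the effective domain of the normal cone data.
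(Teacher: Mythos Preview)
Your proof is correct and follows essentially the same route as the paper: decompose $j_{WI}$ into the smooth tracking part and the convex penalty $I_{[a,b]}\circ F$, apply the sum rule and the chain rule of Section~\ref{sec:nonsmooth} with equality, and invoke convexity of $j_{WI}$ for the equivalence with global optimality. The paper's version simply refers back to Proposition~\ref{prop:boxopt} for the sum-rule and necessity/sufficiency framework and then performs the chain-rule computation on the penalty term; you carry out the same computation but are more explicit about verifying the qualification hypothesis in Lemma~\ref{lem:chainrule} (surjectivity of $F$ when $\mathcal{E}^*w\neq 0$), which the paper leaves implicit.
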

\begin{proof}
The arguments in the previous proof regarding the necessity and sufficiency of $0\in\partial_{C}j_{WI}(q^{*})$ hold here as well, as does the application of the sum rule.  The primary difference here is the chain rule calculation of the penalty term.  We note the Clarke subgradient of this term is a dual element which can be written in weak form,  for a test function $\psi\in L^{2}(\Omega),$ as
\begin{align*}
\int_{\Omega}\partial_{C}(I_{[a,b]}(\langle w,\mathcal{E}q^{*})\rangle )\psi dx &=\int_{\Omega}\Big[\big(D_{s}[\langle w,\mathcal{E}(q^{*})\rangle\big)^{*}\circ N^{C}_{[a.b]}(\langle w,\mathcal{E}q^{*}\rangle)\Big]\psi dx\\
&=\int_{\Omega}\mathcal{E}^{*}(w\cdot N^{C}_{[a.b]}(\langle w,\mathcal{E}q^{*}\rangle)\psi dx\\
&=\int_{\Omega}\mathcal{E}^{*}(w)\cdot N^{C}_{[a,b]}(\langle w,\mathcal{E}q^{*})\rangle)\psi dx
\end{align*} 
which provides the remaining ingredient for the proof.
\end{proof}
Finally, under additional assumptions on $\mathcal{E}$ and the optimal control $q^{*}$, we have the necessary optimality conditions for $j_{TC}:$
\begin{proposition}
\label{prop:optjtc}
Suppose that the function $q^{*}\in L^{2}(\Omega)$ is a local minimizer of $j_{TC},$ and
\begin{enumerate}
\item  $Z \subset \subset \Omega$ has $C^{1}$ boundary and nonempty interior, 
\item $\mathcal{E}$ is surjective on the space of extensions of functions in $C_{c}^{\infty}(Z)$,  
\item $\mathcal{E}(q^{*})$ is continuous on $Z.$
\end{enumerate}
Define $f:L^{2}(\Omega)\rightarrow L^{2}(\Omega)$ given by
\begin{equation*}
f(q):=\int_{Z}\big[\mathds{1}_{(-\infty,a)\cup(b,\infty)}\big(\mathcal{E}q\big)\big](x)dx
\end{equation*}
 either 
\begin{itemize}
\item $f(q^{*})<(1-c)|Z|$ and $-\alpha q^{*}=\mathcal{E}^{*}(\mathcal{E}q^{*}-\overline{\psi}),$ or
\item $f(q^{*})=(1-c)|Z|$ and for any desired accuracy $\epsilon>0,$ there exists $q_{\epsilon}$ s.t. $(q_{\epsilon},f(q_{\epsilon})\in (q^{*},f(q^{*}))+B_{\epsilon}$, with
\begin{equation*}
0\in \beta\big\{\mathcal{E}^{*}(\mathcal{E}q^{*}-\overline{\psi})+\alpha q^{*}\big\}+B^{w}_{\epsilon}+\gamma \partial_{P}f(q_{\epsilon})
\end{equation*}
where 
\begin{equation*}
\partial_{P}f(q_{\epsilon})(x)=\begin{cases}
[0,+\infty)\cdot\big[\mathcal{E}^{*}(\mathds{1}_{\R\setminus[a,b]}\mathcal{E}q_{\epsilon})\big](x) &\text{ if } [\mathcal{E}q_{\epsilon}](x)=b,\\
(-\infty,0]\cdot\big[\mathcal{E}^{*}(\mathds{1}_{\R\setminus[a,b]}\mathcal{E}q_{\epsilon})\big](x) &\text{ if } [\mathcal{E}q_{\epsilon}](x)=a,\\
0& \text{ else,}
\end{cases}
\end{equation*}
and $\beta+\gamma=1$ and $B^{w}_{\epsilon}$ denotes the $\epsilon$-ball in the weak topology of $L^{2}(\Omega).$
\end{itemize}
\end{proposition}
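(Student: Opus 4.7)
The strategy is to split on whether the coverage constraint is active at $q^{*}$ and, in the active case, to bypass Lemma \ref{lem:chainrule} since the penalty term is only lower semicontinuous, falling back on proximal calculus together with a fuzzy sum rule. The inactive case reduces the problem to the smooth Euler equation; the active case requires a sequential/approximate Lagrange multiplier rule, which explains both the $\epsilon$-perturbed point $q_{\epsilon},$ the weak $\epsilon$-ball, and the Fritz John style normalization $\beta+\gamma=1.$

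For the inactive case $f(q^{*})<(1-c)|Z|,$ the goal is to show that the penalty is identically zero in an $L^{2}$-neighborhood of $q^{*},$ so that local minimality reduces to stationarity of the smooth quadratic $g(q):=\tfrac{1}{2}\|\mathcal{E}q-\overline{\psi}\|_{2}^{2}+\tfrac{\alpha}{2}\|q\|_{2}^{2}.$ Assumption (3) gives that $\mathcal{E}q^{*}$ is continuous on the compact set $Z,$ so its level sets $\{\mathcal{E}q^{*}=a\}$ and $\{\mathcal{E}q^{*}=b\}$ are closed; combining boundedness of $\mathcal{E}:L^{2}(\Omega)\to L^{2}(\Omega)$ with the strict gap $(1-c)|Z|-f(q^{*})>0$ and the fact that the measure of the set where $\mathcal{E}q$ exits $[a,b]$ is stable under small $L^{2}$-perturbations away from those level sets, yields a neighborhood on which $f(q)\leq(1-c)|Z|.$ The Fréchet gradient of $g$ must then vanish at $q^{*},$ which is precisely $-\alpha q^{*}=\mathcal{E}^{*}(\mathcal{E}q^{*}-\overline{\psi}).$

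For the active case $f(q^{*})=(1-c)|Z|,$ write $j_{TC}=g+h$ with $h(q):=I_{(-\infty,(1-c)|Z|]}(f(q)).$ I would invoke a proximal fuzzy sum rule in Hilbert space (Clarke--Ledyaev--Stern type, \cite{ClaLedSte98}) to obtain, for each $\epsilon>0,$ a point $q_{\epsilon}$ with $(q_{\epsilon},f(q_{\epsilon}))$ in the $\epsilon$-ball around $(q^{*},f(q^{*})),$ weights $\beta,\gamma\geq 0,~\beta+\gamma=1$ (the normalization coming from the possibly degenerate multiplier, since no constraint qualification is assumed), and $\zeta\in\partial_{P}h(q_{\epsilon})$ with $0\in\beta\,\nabla g(q_{\epsilon})+\gamma\,\zeta+B^{w}_{\epsilon}.$ Since $\nabla g$ is Lipschitz, the difference $\nabla g(q_{\epsilon})-\nabla g(q^{*})$ can be absorbed into $B^{w}_{\epsilon}$ (up to enlarging $\epsilon$). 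A proximal chain rule for the outer scalar map then factors $\partial_{P}h(q_{\epsilon})\subseteq N^{P}_{(-\infty,(1-c)|Z|]}(f(q_{\epsilon}))\cdot\partial_{P}f(q_{\epsilon})=[0,\infty)\cdot\partial_{P}f(q_{\epsilon}),$ so the scalar $\gamma$ already subsumes the nonnegativity from the normal cone.

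The hard part is the explicit computation of $\partial_{P}f(q_{\epsilon}).$ Since the integrand $\mathds{1}_{(-\infty,a)\cup(b,\infty)}$ is neither Lipschitz nor strictly differentiable in its argument, no chain rule from Section \ref{sec:nonsmooth} applies, and one must work directly from the proximal subgradient inequality. I would use surjectivity (2) to produce a family of smooth test perturbations $v=\mathcal{E}\eta$ supported in $Z,$ the $C^{1}$-boundary in (1) to control contributions from $\partial Z,$ and the continuity inherited from (3) (transferred to $q_{\epsilon}$ by making $\epsilon$ small) to localize the jump set of the integrand to the pointwise level sets $\{\mathcal{E}q_{\epsilon}=a\}\cup\{\mathcal{E}q_{\epsilon}=b\}.$ At a point $x$ where $[\mathcal{E}q_{\epsilon}](x)=b,$ only upward perturbations of the state change the integrand, producing a nonnegative scalar multiple of $[\mathcal{E}^{*}(\mathds{1}_{\R\setminus[a,b]}\mathcal{E}q_{\epsilon})](x);$ the symmetric analysis at $a$ gives a nonpositive multiple; and where the state lies strictly inside $(a,b)$ the integrand is locally constant, so the proximal subgradient vanishes. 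Assembling these pieces inside the fuzzy inclusion produces the stated conditions.
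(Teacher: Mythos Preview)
Your split into inactive/active cases and the use of an approximate (fuzzy) rule in the active case matches the paper's structure. The substantive divergence is in how the active case is executed. The paper uses assumptions (2) and (3) to show that $f$ is \emph{directionally Lipschitz} at $q^{*}$: continuity of $\mathcal{E}q^{*}$ on $Z$ makes $Z^{b}:=\{x\in Z:\mathcal{E}q^{*}(x)>b\}$ open and (since $f(q^{*})>0$) of positive measure, and surjectivity onto extensions of $C_{c}^{\infty}(Z)$ furnishes an open set of controls $p$ with $\mathcal{E}p\geq 0$ supported in $Z^{b}$, whence $f(q^{*}+tp)=f(q^{*})$ for all $t>0$. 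You never establish this property; you instead deploy the structural hypotheses only inside the computation of $\partial_{P}f$. For that computation, the paper also does not argue directly from the proximal inequality as you propose, but invokes subdifferentiation of integral functionals with lower semicontinuous integrands (citing \cite{OutRom05} and \cite{IofRoc96}), reducing everything to the pointwise evaluation of $\partial_{P}\mathds{1}_{(b,\infty)}$.

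The genuine gap in your route is the step $h:=I_{(-\infty,(1-c)|Z|]}\circ f$ followed by the claimed proximal chain rule $\partial_{P}h(q_{\epsilon})\subseteq[0,\infty)\cdot\partial_{P}f(q_{\epsilon})$. No such chain rule holds for proximal subgradients of an lsc outer function composed with a merely lsc inner function without a qualification condition, and you supply none; this is precisely where things fail for nonsmooth, nonconvex $f$. The paper sidesteps the issue entirely: rather than a fuzzy sum rule on $g+h$ followed by a chain rule on $h$, it invokes a Lagrange multiplier rule (Theorem~2.14 of \cite{BorTreZhu98}) that treats the inequality $f(q)\leq(1-c)|Z|$ directly as a constraint and delivers the Fritz--John weights $\beta+\gamma=1$, the perturbed point $\tilde{q}$, and $\partial_{P}f(\tilde{q})$ in a single stroke.
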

\begin{remark}
The assumption on the image $\mathcal{E}$ is motivated by the use of the method of manufactured solutions in applications.  In such applications, one may test solution methods by applying the relevant differential operator to a constructed solution in order to find a relevant control/source.  It is easy to find examples where it holds; one example is where $\mathcal{E}$ is associated to the Poisson problem with distributed source:
\begin{equation*}
-\Delta \psi=q.
\end{equation*}
If we instead have $\mathcal{E}$ associated to the Poisson problem with boundary control,
\begin{equation*}
-\Delta \psi=0,~\psi|_{\partial\Omega}=q,
\end{equation*}
then clearly, the relevant assumptions are not satisfied in the most general of settings.  We also are not sure how strict the assumption on the continuity of the optimal state is in a general setting.  However, it is possible that further information on the specific form of $Z$ and $\Omega$ will allow for this assumption to be unnecessary.
\end{remark}
\begin{remark}
The ``fuzzy'' nature of this rule may seem an obstacle at first glance to implementation.  However, if we discretize the problem with appropriate error bounds, we may require $\epsilon$ to be sufficiently small to be dominated by the discretization error.  Furthermore, the ``fuzzy'' nature of the optimality condition can be improved in the presence of constraint qualifications being satisfied. See \cite{BorZhu99} for a survey of some appropriate constraint qualifications.
\end{remark}
\begin{proof}{\em (of Proposition \ref{prop:optjtc})}
For simplicity, we only consider the case where $a=-\infty.$ Then we have 
\begin{equation*}
f(q):=\int_{Z}[\mathds{1}_{(b,\infty)}(\mathcal{E}q)](x)dx
\end{equation*}
If $f(q^{*})<(1-c)|Z|,$ it is clear that the result holds as the constraint is inactive and we recover the classical optimality conditions.  Assume that $f(q^{*})=(1-c)|Z|.$
We know that the following set has positive measure:
\begin{equation*}
Z^{b}:=\{x\in Z: \mathcal{E}q^{*}(x)\in(b,\infty)\}
\end{equation*}
and thus contains a nonempty open set. So there exists an open set of nonnegative smooth functions $\{\phi_{\gamma}\}$ with support contained in $Z^{b}.$   By assumption there is a set of corresponding controls $p_{\gamma}$ such that $\phi_{\gamma}=\big(\mathcal{E}p_{\gamma}\big)|_{Z}$.  By construction, $f(q+tq_{\gamma})=f(q)$ for all $t>0.$  Therefore there is an open set of directions such that $f$ at $q$ is directionally Lipschitz along them. 

Then, we note that if $\zeta\in\partial_{P}f(q)$ then almost everywhere in $\Omega$ we have that
\begin{equation*}
\zeta(x)\in\Big[\mathcal{E}^{*}\big(\mathds{1}_{(b,\infty)}\mathcal{E}q\big)\Big]\cdot\big[ \partial_{P}\mathds{1}_{(b,\infty)}\mathcal{E}q(x)\big];
\end{equation*}
where we note that differentiation under the integral is allowed for proper lower semicontinuous integrands in $L^{2}(\Omega)$ as discussed in \cite{OutRom05} and \cite{IofRoc96} and that, as above, $\mathcal{E}$ is regular in the sense of Clarke.  It is immediately seen that
\begin{equation*}
 \partial_{P}\mathds{1}_{(b,\infty)}\mathcal{E}(q)(x)=
\begin{cases} 
[0,+\infty) & \text{ if }[\mathcal{E}q](x)=b,\\
0 & \text{ else.}
\end{cases}
\end{equation*}
The rest of the proof follows from a lemma which is the following particular case of Theorem 2.14 from \cite{BorTreZhu98}.
\begin{lemma}
Assume that $q^{*}$ is a local minimizer of $j_{TC}.$  Then for any number $\epsilon >0,$ and weak neighborhood $V$ of $0,$ there exists $\tilde{q}$ such that  $\big(\tilde{q},f(\tilde{q})\big)\in (q^{*},f(q^{*})\big)+\epsilon B$ and
\begin{equation*}
0\in\beta\mathcal{E}^{*}(\mathcal{E}(q^{*}-\overline{\psi}))+\gamma\partial_{P}f(\tilde{q})+V
\end{equation*}
where $\beta+\gamma=1.$
\end{lemma}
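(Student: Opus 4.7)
The plan is to recognize the minimization of $j_{TC}$ as the constrained problem ``minimize $g(q):=\tfrac{1}{2}\|\mathcal{E}q-\overline{\psi}\|_2^2+\tfrac{\alpha}{2}\|q\|_2^2$ subject to $f(q)\leq(1-c)|Z|$'' and then apply the fuzzy multiplier rule of Borwein--Treiman--Zhu (Theorem 2.14 of \cite{BorTreZhu98}). The ambient space $L^2(\Omega)$ is Hilbert and hence Fr\'echet smooth; the objective $g$ is globally $C^\infty$ with gradient $\nabla g(q)=\mathcal{E}^*(\mathcal{E}q-\overline{\psi})+\alpha q$; and the constraint function $f$ is lower semicontinuous by construction. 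These are exactly the structural hypotheses on which BTZ 2.14 operates.

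The second step is to verify the qualification hypothesis needed to extract a nontrivial pair $(\beta,\gamma)$. BTZ 2.14 requires a directional Lipschitz property of $f$ at $q^{*}$ along an open cone of directions, and this has already been established in the preceding portion of the proof: the family $\{\phi_\gamma\}$ of nonnegative bumps supported in $Z^{b}$ (constructed from the $C^{1}$-boundary assumption on $Z$, continuity of $\mathcal{E}q^{*}$ on $Z$, and the surjectivity hypothesis on $\mathcal{E}$) provides an open set of admissible directions along which $f$ is constant and hence trivially directionally Lipschitz. Invoking the theorem then yields, for any prescribed $\epsilon>0$ and weak neighborhood $V$ of $0$, a point $\tilde{q}$ with $\|\tilde{q}-q^{*}\|<\epsilon$ and $|f(\tilde{q})-f(q^{*})|<\epsilon$, and scalars $\beta,\gamma\geq 0$ with $\beta+\gamma=1$, satisfying
\[
0\in\beta\nabla g(\tilde{q})+\gamma\partial_P f(\tilde{q})+V.
\]
Because $\nabla g$ is norm-continuous, $\beta\bigl(\nabla g(\tilde{q})-\nabla g(q^{*})\bigr)$ can be absorbed into $V$ after shrinking $\epsilon$, allowing us to write $\beta\nabla g(q^{*})$ in place of $\beta\nabla g(\tilde{q})$. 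The display in the lemma abbreviates $\nabla g(q^{*})$ by $\mathcal{E}^{*}(\mathcal{E}(q^{*}-\overline{\psi}))$, with the $\alpha q^{*}$ contribution absorbed (consistent with the form stated in Proposition~\ref{prop:optjtc}).

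The principal obstacle I anticipate is matching the precise qualification hypothesis of BTZ 2.14 to the nonsmooth structure of $f$. Since $f$ arises from integrating an indicator function against a linear operator, it is nowhere Lipschitz, so standard fuzzy sum rules which require Lipschitzness of all-but-one summand cannot be invoked directly; the argument must rely specifically on the directional Lipschitz property along the cone of $\phi_\gamma$, which in turn rests on the geometric assumptions (1)--(3) of Proposition~\ref{prop:optjtc}. A secondary subtlety is that the canonical form of BTZ 2.14 is a sequential/``fuzzy'' statement at possibly distinct approximate points per summand; the global continuity of $\nabla g$ is precisely what permits consolidation into the single point $\tilde{q}$ appearing in the lemma.
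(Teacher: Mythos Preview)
Your proposal is correct and follows essentially the same route as the paper: the lemma is obtained as a particular case of Theorem~2.14 of \cite{BorTreZhu98}, with the smoothness of the tracking functional used to collapse the fuzzy sum at the smooth summand to the single point $q^{*}$. Your discussion of the directional Lipschitz cone and the absorption of $\beta(\nabla g(\tilde q)-\nabla g(q^{*}))$ into $V$ makes explicit exactly what the paper summarizes as ``tak[ing] advantage of the smoothness of the tracking functional to reduce the `fuzziness' of that rule.''
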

The referenced Theorem is in a much more general setting, and relies on a ``fuzzy'' sum rule (see also \cite{ClaLedSte98} for a discussion of such rules).  Here we take advantage of the smoothness of the tracking functional to reduce the ``fuzziness'' of that rule to some extent.  From there the result follows immediately.  
\end{proof}

\section{Numerical Implementations}
\label{sec:optalg}

\subsection{Subgradient descent method} \label{subsec:steep} We consider a steepest descent algorithm for solving the problems by noting that the element subgradient of minimal norm is a direction of descent.  
\begin{algorithm}
We initialize with a feasible point $q^{(0)}.$
\begin{enumerate}
\item At $q^{(k)},$ we calculate the state $\psi^{(k)}:=\mathcal{E}(q^{(k)})$ and the classical adjoint $\lambda^{(k)}=\mathcal{E}^{*}(\psi^{(k)}-\overline{\psi}).$
\item We then minimize $\frac{1}{2}||\rho+\lambda^{(k)}+\alpha q^{(k)}||_{2}^{2}+\frac{\beta}{2}||\rho||^{2}_{2}$ over $\rho$ and label the minimizer $\rho^{(k)}$ for a very small $\beta>0.$  If the minimum norm is below tolerance, {\em STOP}.  The minimization is constrained depending on the problem:
\begin{enumerate}
\item For the box constrained problem, $\rho(x)\in N_{[a,b]}(\psi^{(k)})(x),$
\item For the weighted integral constrained problem, $\rho(x)=r\cdot\mathcal{E}^{*}(\psi^{(k)})$ for $r\in N_{[a,b]}(\langle w,\psi^{(k)}\rangle),$
\end{enumerate}
\item Perform an Armijo line search along the direction $d^{(k)}:=-\rho^{(k)}-\lambda^{(k)}-\alpha q^{(k)}$ to obtain $q^{(k+1)}.$ If resulting step size is below tolerance, {\em STOP}.
\end{enumerate}
\end{algorithm}
For the total coverage constrained problem, we consider for notational simplicity only the case where we have discretized the problem.  Then the minimization in step (2) is performed only if the constraint is active at the current iterate and is taken over \begin{equation*}
\rho_{h}(x)\in -[\mathcal{E}^{*}_{h}(\mathds{1}_{\R\setminus[a,b]}\psi_{h}^{(k)})](x)\cdot N_{[a,b]}(\psi_{h}^{(k)})
\end{equation*}
where we use the subscript $h$ to emphasize this is taken in the discrete setting (that is, $\mathcal{E}_{h}$ is the discretized adjoint operator).  Note that the direction search is only performed if the constraint is active and that for the case of the weighted integral constraint, it is an additional line search.  

\begin{remark}
As noted in the Introduction, significant attention has been paid to efficient numerical methods for box constrained problems; specifically, semismooth Newton Methods and smoothing strategies \cite{HintermullerUlbrich2004, HintermullerItoKunisch2003, GugatHerty} have demonstrated better convergence rates than might be expected from the above method.  We include it for completeness. 
\end{remark}

\subsection{Numerical Example}
\label{sec:numerics}
In this section, we focus on test problems where the solution operator $\mathcal{E}$ is associated to solving the Poisson equation subject to homogeneous Dirichlet boundary conditions.
\begin{eqnarray*}
-\Delta \psi&=q\\
\psi|_{\partial\Omega}&=0. 
\end{eqnarray*}
We test the above algorithm where the desired state is
\begin{equation*}
\overline{\psi}_{1}:=\mathcal{E}(e^{-|x|^{2}/4})
\end{equation*}
where $\Omega$ the unit circle; it is then rescaled into a unit $L^{2}$ function.  We use $1.e-5$ as tolerance for both convergence tests described above and set $\alpha=.001$.  We solve  the weighted integral constrained problem with $w$ as the characteristic function associated with the $0.25$ radius ball using the descent method described above.  The upper bound is set to $0.12.$  As a reference, $\langle w,\overline{\psi}\rangle_{L^{2}(\Omega)}\approx0.1817$ in this example.  By encoding the constraint via infinite penalization, we achieve significant speedup with the number of PDE solves being independent of mesh while capturing the minimum more closely than the SQP.  One such result is displayed in Figure \ref{fig:avcirc08}, along with a contour plot of the result to show the effect of the constraint.   We repeat this for a problem where the source is the same, however the mesh is on an L-shaped domain, $w$ is the the characteristic of $[-0.5,0.5]\times [-0.25,-0.75]$ and the upper bound for the average value is set as $\frac{1}{2}\langle w,\overline{\psi}\rangle_{L^{2}(\Omega}.$  The relative counts for iterations and PDE solver calls are similar to those of the previous example; thus, we show the residual for the results of infinite penalization and SQP compared with the boundary of the support of $w$ along with the optimal result from infinite penalization.
\begin{table}
\begin{tabular}{| c | | c | c | c || c | c | c|}
\hline
& \multicolumn{3}  {|c||} {Inf. Pen. } & \multicolumn{3}{|c|}{SQP}\\
\hline
Elem \#. & Opt. Cost & Iters. & PDE Solves & Opt. Cost & Iters. & PDE Solves\\
\hline
377 & 0.0167& 6 & 446 & 0.021 & 14 & 5330\\
541 & 0.0263& 4 & 283 & 0.0262 & 14 & 7641\\
749 & 0.0197& 6 & 440 & 0.0220 & 14 & 10554 \\
1105  & 0.0217& 4 & 281 & 0.0316 & 10 & 11099\\
1445 & 0.0209& 4 & 268 & 0.0225 & 14 & 20300\\
2109 & 0.0212& 4 & 284 & 0.0226 & 9 & 19023\\
\hline
\end{tabular}
\caption{Weighted integral problem mesh dependence comparison}
\label{tab:WImesh}

\end{table}
\begin{figure}
\centering
\subfigure[Target]{
\label{fig:avtarg08}
\includegraphics [width=6cm]{./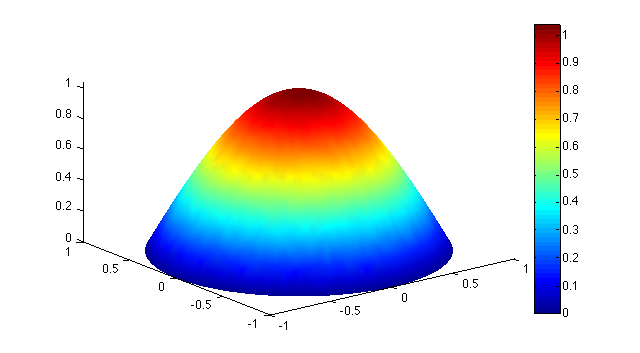}
}
\subfigure[Result of infinite penalization]{
\includegraphics [width=6cm]{./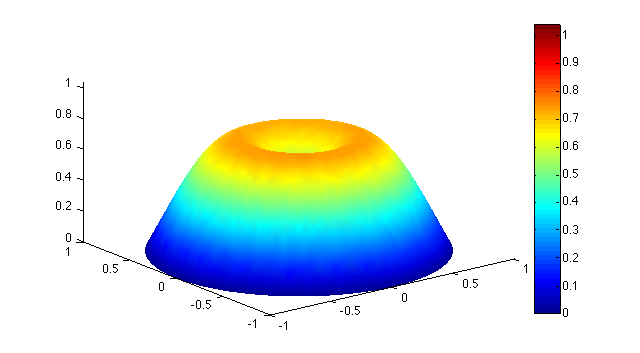}
}\\
\subfigure[Contours of result]{
\includegraphics[width=6cm]{./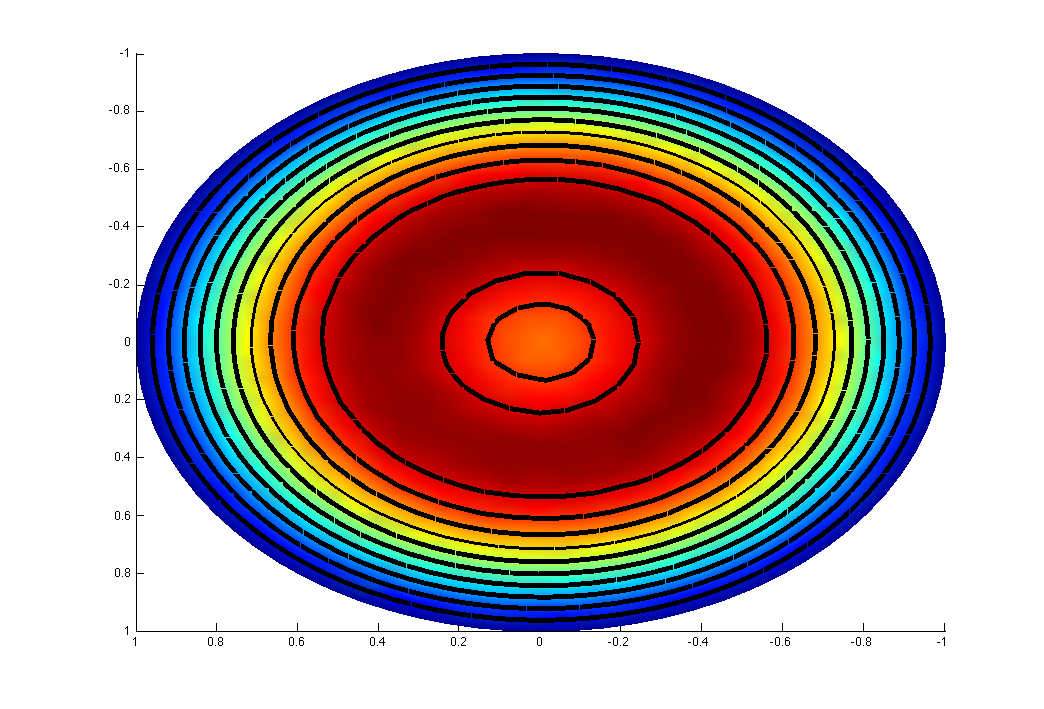}
}
\caption{Weighted integral constrained target and result from gradient descent using infinite penalization}
\label{fig:avcirc08}
\end{figure}

\begin{figure}
\centering
\subfigure[Target]{
\label{fig:boxtarg08}
\includegraphics [width=6cm]{./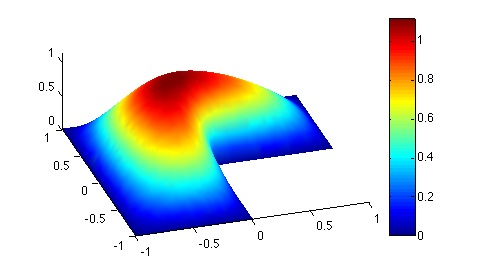}
}
\subfigure[Result of infinite penalization]{
\includegraphics [width=6cm]{./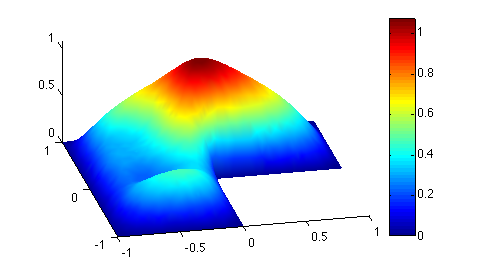}
}\\
\subfigure[Infinite Penalization $|\psi^{*}-\overline{\psi}|$\newline with boundary of support of $w$ in white]{
\includegraphics[width=6cm]{./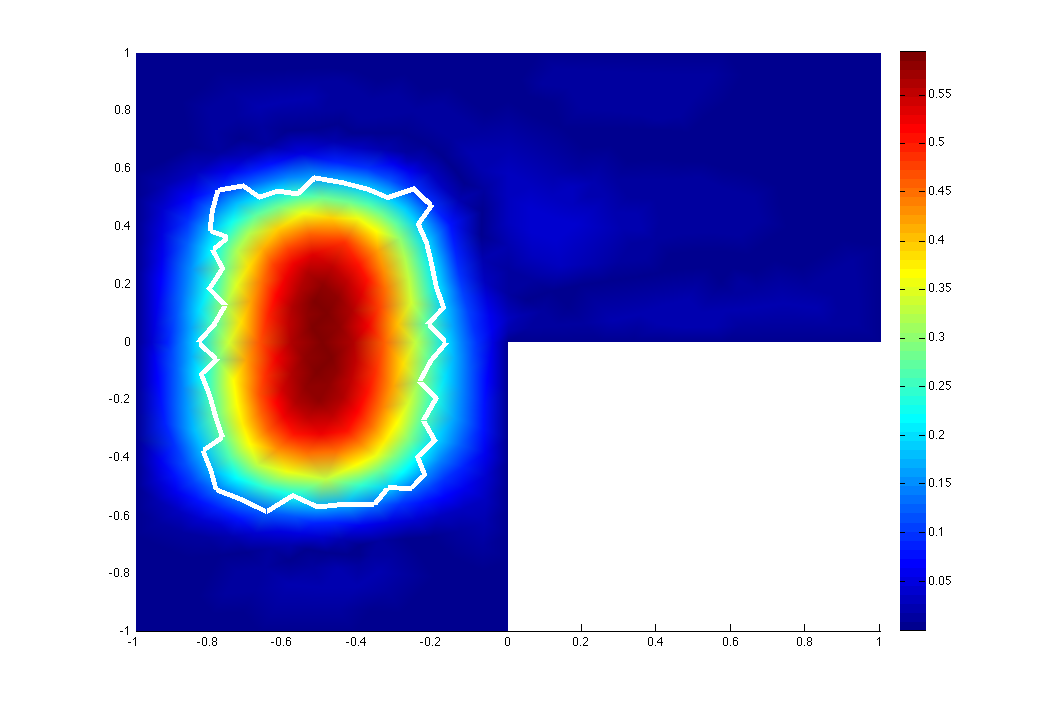}
}
\subfigure[SQP $|\psi^{*}-\overline{\psi}|$ with boundary of support of $w$ in white]{
\includegraphics[width=6cm]{./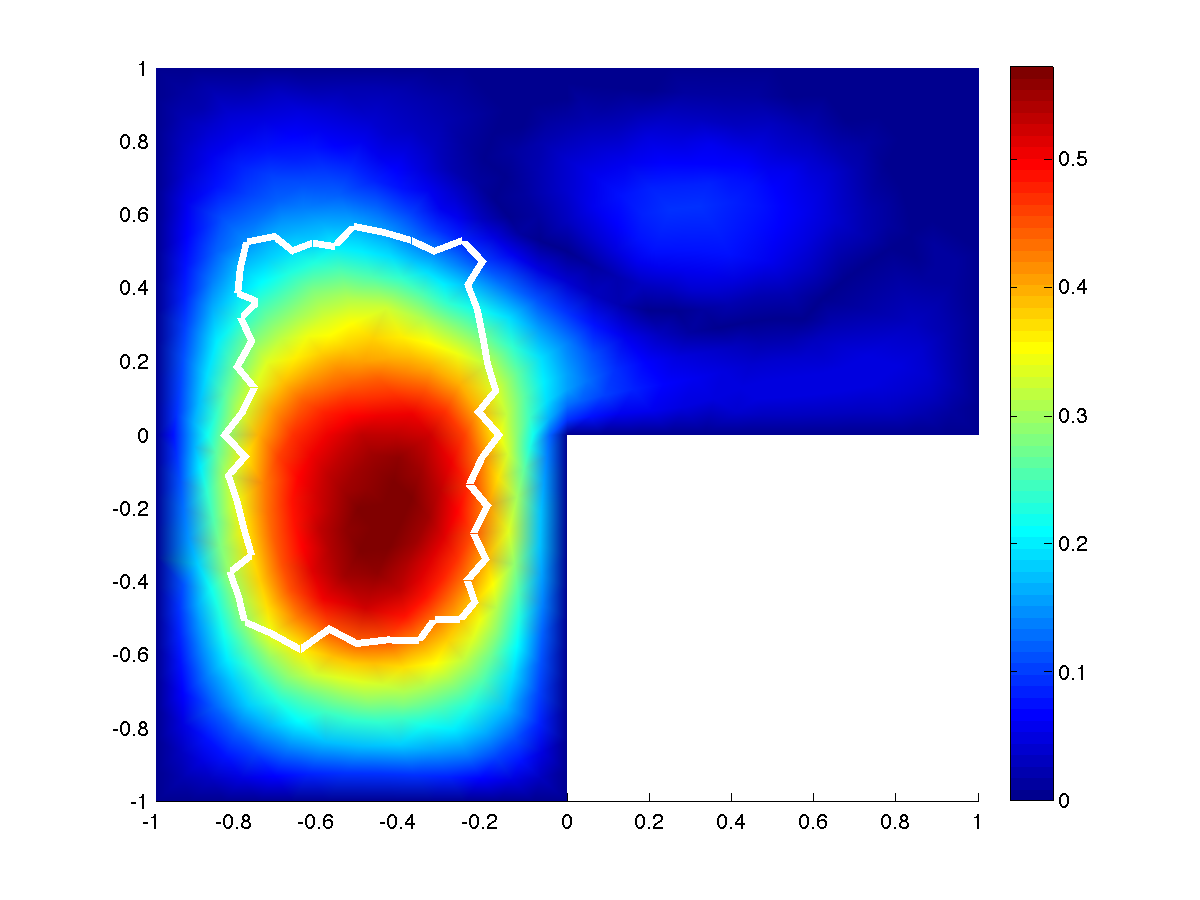}
}
\caption{Weighted integral constrained target and result from BFGS  using infinite penalization}
\end{figure}

\section{Concluding Remarks}
We have considered a collection of primal-based methods in the absence of constraint qualifications for deriving necessary optimality conditions for optimal control problems involving PDEs whose solution operator is linear and bounded.  The framework used is rather flexible, and can be extended to cases where the operator $\mathcal{E}$ is strictly differentiable, as opposed to linear and bounded.  Furthermore, other constraints can be implemented, as long as they satisfy the appropriate regularity in order to make use of the appropriate chain rule.  Using these conditions, we implemented a steepest descent algorithm significantly performs better than a black box SQP solver.  At this stage, it is unclear to what extent fully optimizing the norm of the subgradient is required for descent; also it may be possible to use nonsmooth second-order rules to develop a robust Newton-like method.

\bibliographystyle{plain}
\bibliography{artikel2,references}

\end{document}